\newtheorem*{thm}{Theorem}
\newtheorem{lemma}{Lemma}
\newtheorem{corollary}{Corollary}
\newcommand{\lcm}{\operatorname{lcm}}
\begin{document}

\title[]{On the largest singular vector\\ of the Redheffer matrix}

\author[]{Fran\c{c}ois Cl\'ement}
\author[]{Stefan Steinerberger}
\address{Department of Mathematics, University of Washington, Seattle}
\email{fclement@uw.edu} 
\email{steinerb@uw.edu}

\begin{abstract}
The Redheffer matrix $A_n \in \mathbb{R}^{n \times n}$ is defined by setting $A_{ij} = 1$ if $j=1$ or $i$ divides $j$ and 0 otherwise. One of its many interesting properties is that $\det(A_n) = \mathcal{O}(n^{1/2 + \varepsilon})$ is equivalent to the Riemann hypothesis. The singular vector $v \in \mathbb{R}^n$ corresponding to the largest singular value carries a lot of information: $v_k$ is small if $k$ is prime and large if $k$ has many divisors. We prove that the vector $w$ whose $k-$th entry is the sum of the inverse divisors of $k$, $w_k = \sum_{d|k} 1/d$, is close to a singular vector in a precise quantitative sense.
\end{abstract}

 \maketitle

\section{Introduction and Result}
\subsection{Introduction}
One of the (mostly equivalent) definitions of the Redheffer matrix $A_n$, introduced by Redheffer \cite{redheffer} in 1977, is 
$$ A_{ij} = \begin{cases} 1 \qquad &\mbox{if}~j=1 ~\mbox{or if}~i|j \\ 0 \qquad &\mbox{otherwise.} \end{cases}$$
This means that every second entry in the second row is 1, every third entry in the third row and so on (see Fig. 1 for $n=8$ and $n=20$).
\begin{center}
\begin{figure}[h!]
        \begin{tikzpicture}
        \node at (0,0) {$\left(
\begin{array}{cccccccc}
 1 & 1 & 1 & 1 & 1 & 1 & 1 & 1 \\
 1 & 1 & 0 & 1 & 0 & 1 & 0 & 1 \\
 1 & 0 & 1 & 0 & 0 & 1 & 0 & 0 \\
 1 & 0 & 0 & 1 & 0 & 0 & 0 & 1 \\
 1 & 0 & 0 & 0 & 1 & 0 & 0 & 0 \\
 1 & 0 & 0 & 0 & 0 & 1 & 0 & 0 \\
 1 & 0 & 0 & 0 & 0 & 0 & 1 & 0 \\
 1 & 0 & 0 & 0 & 0 & 0 & 0 & 1 \\
\end{array}
\right)$};
\node at (5,0) {\includegraphics[width=0.27\textwidth]{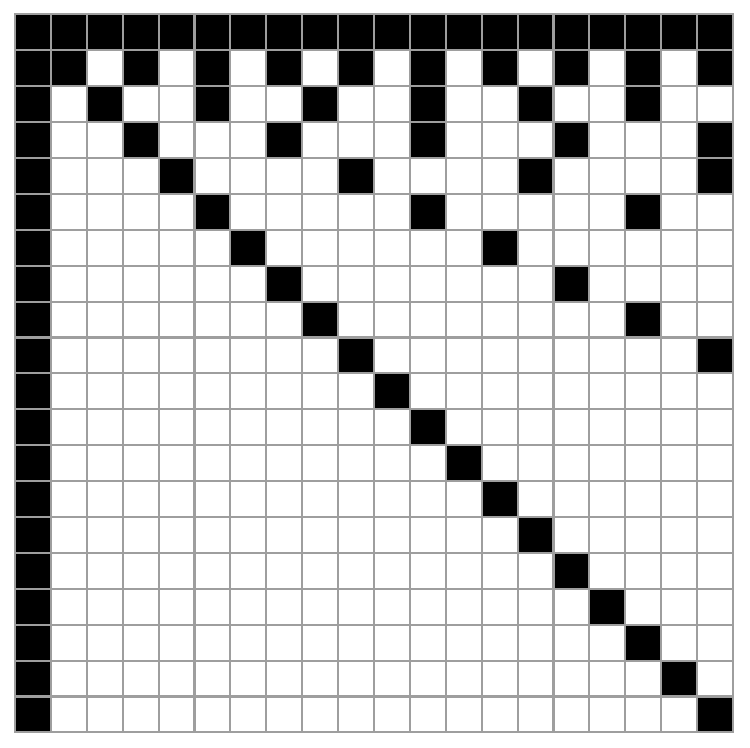}};
    \end{tikzpicture}
    \caption{Left: $A_{8}$. Right: $A_{20}$ with entries 1 highlighted.}
\end{figure}
\end{center}

Redheffer observed that
$$ \det(A_n) = \sum_{k=1}^{n} \mu(k),$$
 where $\mu: \mathbb{N} \rightarrow \left\{-1,0,1\right\}$ is the Moebius function. Littlewood \cite{littlewood} showed, see also Titchmarsh \cite[\S 14.25]{tit}, that the Riemann hypothesis is equivalent to the fact that for all $\varepsilon >0$, one has 
 $\det(A_n) = \mathcal{O}\left( n^{1/2 + \varepsilon}\right)$.
This suggests that understanding determinants of Redheffer matrices is perhaps not very easy. These matrices have attracted a lot of attention \cite{bord, pierre}, much of it focused on its spectral structure: we refer to
Barrett-Forcade-Pollington \cite{b0}, Barrett-Jarvis \cite{barrett}, Jarvis \cite{jarvis}, Kline \cite{k0, k1, k2}, Robinson-Barrett \cite{robinson}, Vaughan \cite{vaughan, vaughan2}. Less is known about the singular values, we refer to Cheon-Kim \cite{ch} and Hilberdink \cite{seventeen, hilberdink}.

\subsection{Two pictures.}
We were motivated by the following picture which shows the singular vector corresponding to the largest singular value of $A_{1000}$ on the left. The index $n=1000$ is somewhat arbitrary, the picture is consistent for all $A_n$ for all $n$ large. 
There is a `line' emerging at the bottom and this line corresponds to prime indices. There is a second line emerging that corresponds to indices that are of the form $2p$ with $p$ prime. On the other side, in the case of $n=1000$, the indices $i$ for which $v_i$ assumes a new record value are 2, 4, 6, 12, 24, 36, 48, 60, 120, 180, 240, 360, 720, 840 -- these are \textit{highly composite numbers}, numbers where the number of divisors reaches a new record, that are smaller than 1000.

\begin{center}
\begin{figure}[h!]
        \begin{tikzpicture}
\node at (0,0) {\includegraphics[width=0.48\textwidth]{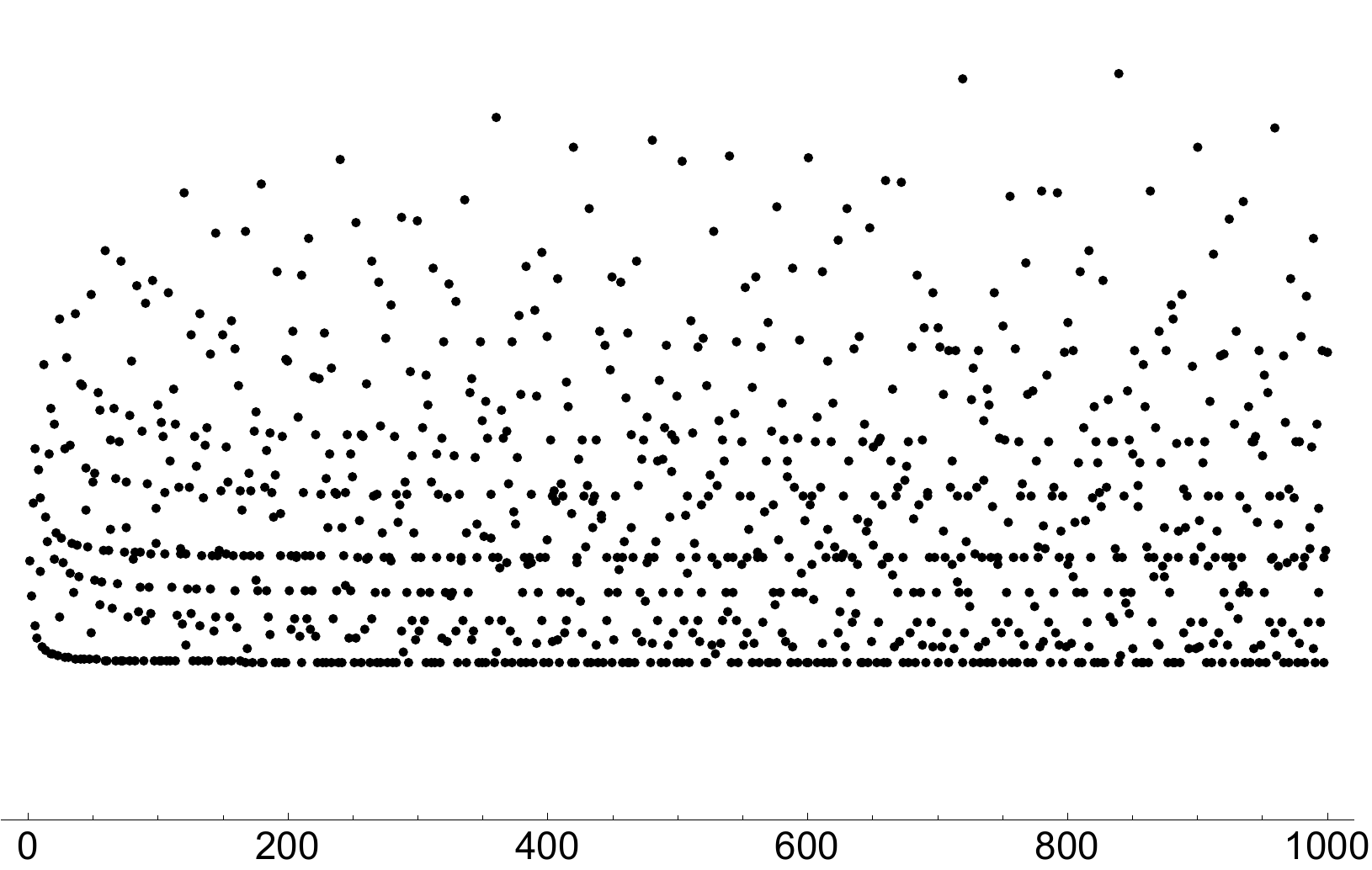}};
\node at (6.5,0) {\includegraphics[width=0.48\textwidth]{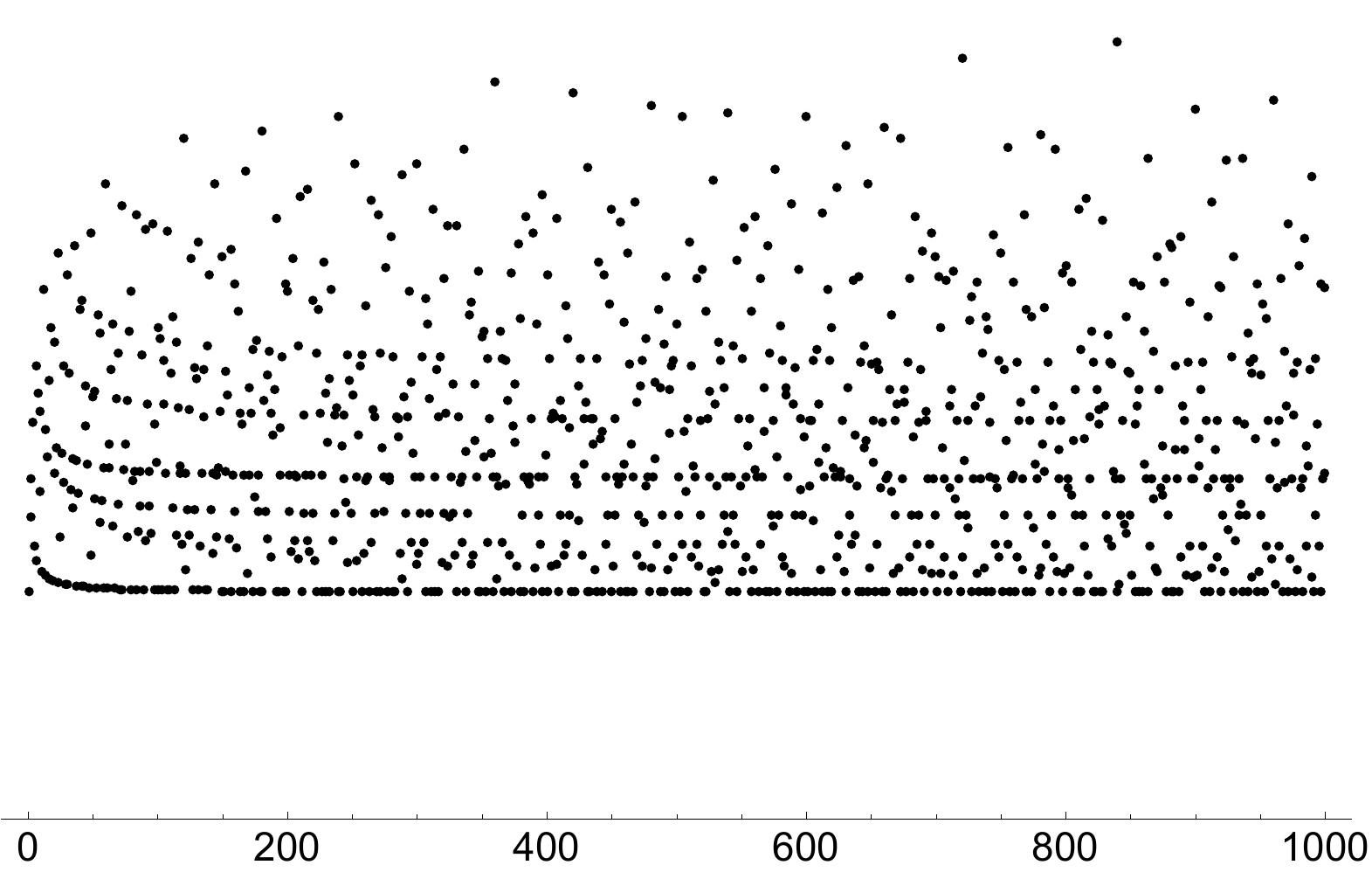}};
    \end{tikzpicture}
    \caption{Left: largest singular vector of $A_{1000}$.  `Lines' at the bottom correspond to primes, large entries to number with many divisors. Right: $v_{1000}$ looks almost like the singular vector.}
\end{figure}
\end{center}
The main purpose of our paper is to propose an approximation to what that singular vector may be (or rather: give a simple vector that seems to share much of the same characteristics).  Our candidate is the vector $v_n \in \mathbb{R}^n$ given by
$$ v_n =  \left(  \frac{\sigma(k)}{k} \right)_{k=1}^{n} = \left(\sum_{d|k} \frac{1}{d}\right)_{k=1}^{n},$$
where $\sigma(n)$ is the sum of all the divisors of $n$. Plotting this vector (see Fig. 3) shows that it appears to be \textit{very} similar to the largest singular vector; in particular, $(v_n)_k \sim 1$ when $k$ is a prime number. Conversely, $(v_n)_k$, as a function of $k$, assumes a new maximal value whenever $k$ is a `superabundant' number (see \cite{erd} and OEIS A004394). It is an interesting question whether the singular vectors assume their new records in highly composite numbers or superabundant numbers or something else entirely (the smallest highly composite number that is not superabundant is 7560, the smallest superabundant number that is not highly composite is 1163962800; these things may not be easy to test). A closer inspection shows that there are \textit{some} differences between the singular vector and $v$ but they are not easy to make precise.

\subsection{The Result.}

Our main result is a precise quantitative description showing that our candidate vector $v_n$ is `close' to a singular vector of $A_n$. To prepare our result, we quickly recall that singular vectors of a matrix $A_n$ are the same as eigenvectors of $A_n^T A_n$ and that the Cauchy-Schwarz inequality is only sharp if both vectors are linearly dependent. These two facts imply that
        $$ \left\langle \frac{v}{\|v\|}, \frac{A_n^T A_n v}{\| A_n^T A_n v\|} \right\rangle = 1 \quad \Longleftrightarrow \quad
 v~\mbox{is a singular vector of}~A_n.$$
Our main result is that, asymptotically, $v_n$ is not a singular vector of $A_n$ but it is, in this particular metric, \textit{very} close to being one!

\begin{thm} The limit
 $$ \lim_{n \rightarrow \infty} \left\langle \frac{v_n}{\|v_n\|}, \frac{A_n^T A_n v_n}{\| A_n^T A_n v_n\|} \right\rangle = \alpha \sim 0.9979\dots \qquad \mbox{exists.}$$
 Moreover, the limit $\alpha$ has the closed form expression
 $$ \alpha = \frac{\sqrt{2}}{\sqrt{5} \sqrt{\zeta(3)}} \left( \sum_{d=1}^{\infty} \frac{c_d^2}{d^2} \right)  \left( \sum_{ d_1, d_2=1}^{\infty}\frac{c_{d_1} c_{d_2} \gcd(d_1, d_2)}{d_1^2 d_2^2} \right)^{-1/2},$$
 where $c_d = \sum_{n=1}^{\infty} \gcd(n,d)/n^2$ and $\zeta$ is the Riemann zeta function.
\end{thm}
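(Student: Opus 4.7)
The plan is to exploit the algebraic identity
$$\left\langle \frac{v_n}{\|v_n\|}, \frac{A_n^T A_n v_n}{\|A_n^T A_n v_n\|}\right\rangle = \frac{\|A_n v_n\|^2}{\|v_n\|\,\|A_n^T A_n v_n\|},$$
which reduces the problem to determining the leading-order asymptotic of each of the three Euclidean norms $\|v_n\|$, $\|A_n v_n\|$, and $\|A_n^T A_n v_n\|$ as $n \to \infty$. All three are sums over $k \le n$ of simple divisor expressions, so the uniform strategy is to expand any squares into double sums over divisors, swap the order of summation, approximate $\lfloor n/m\rfloor$ by $n/m$, and identify the leading coefficient as a convergent number-theoretic series.

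Concretely, for the first norm one expands $(\sigma(k)/k)^2 = \sum_{d_1, d_2 \mid k} 1/(d_1 d_2)$ and uses the identity $\gcd(d_1, d_2) = \sum_{e \mid \gcd(d_1,d_2)} \phi(e)$ to compute $\sum_{d_1, d_2 \ge 1} \gcd(d_1,d_2)/(d_1 d_2)^2 = \zeta(2)^2 \zeta(3)/\zeta(4) = 5\zeta(3)/2$, whence $\|v_n\|^2 \sim (5\zeta(3)/2)\,n$. For the second norm, for $i \ge 2$ one has $(A_n v_n)_i = 1 + \sum_{m \le n/i} \sigma(im)/(im)$; writing $\sigma(im)/(im) = \sum_{d \mid im} 1/d$ and noting that $d \mid im$ is equivalent to $(d/\gcd(d,i)) \mid m$ yields
$$(A_n v_n)_i \sim \frac{n}{i}\, c_i, \qquad c_i = \sum_{d=1}^\infty \frac{\gcd(d,i)}{d^2},$$
so $\|A_n v_n\|^2 \sim n^2 \sum_{i \ge 1} c_i^2/i^2$. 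For the third norm, with $w = A_n v_n$, one has $(A_n^T w)_j = \sum_{i \mid j} w_i \sim n \sum_{i \mid j} c_i/i$ for $j \ge 2$; squaring, summing over $j \le n$, and reusing $\lfloor n/\lcm(i_1,i_2)\rfloor \sim n\gcd(i_1,i_2)/(i_1 i_2)$ gives
$$\|A_n^T A_n v_n\|^2 \sim n^3 \sum_{d_1, d_2 \ge 1} \frac{c_{d_1} c_{d_2} \gcd(d_1, d_2)}{d_1^2 d_2^2}.$$
Substituting the three asymptotics into the right-hand side of the first displayed identity reproduces the claimed closed form for $\alpha$.

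The principal obstacle is rigorous error control; the manipulations above capture only the leading order. The arithmetic function $c_i$ satisfies $c_i = \zeta(2)\sum_{e \mid i}\phi(e)/e^2$, is multiplicative, and is bounded by $c_i \le \zeta(2)\sigma(i)/i = O(\log\log i)$, which secures absolute convergence of the limiting series. The delicate points are then: (a) a uniform-in-$i$ error bound for $(A_n v_n)_i - (n/i)\,c_i$, since the squared errors must sum to $o(n^2)$ and the approximation degrades when $n/i$ is small; (b) the analogous uniform control for the outer application of $A_n^T$, where one must handle the contribution from $j$ near $n$ at which the divisor sum is shortened; and (c) tail bounds on the infinite $d$-series defining $c_i$ that are uniform in $i$. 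A convenient way to organize all three is to execute every divisor sum via Euler products of multiplicative functions, which converts the truncation questions into analytic estimates on Dirichlet series.
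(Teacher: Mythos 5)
Your proposal is correct and follows a route that is genuinely different in organization from the paper's, although the underlying number theory is the same. The paper never uses the identity $\left\langle v, A^T A v\right\rangle = \|Av\|^2$; instead it replaces $A_n^T A_n$ by the GCD matrix $B_n = (\sigma_0(\gcd(i,j)))$, proves that the replacement error $\|(A_n^T A_n - B_n)v_n\|$ is $O_\varepsilon(n^{1+\varepsilon})$, and then computes $(B_n v_n)_i \sim n\sum_{d\mid i} c_d/d$, $\left\langle v_n, B_n v_n\right\rangle$, and $\|B_n v_n\|$. Your plan computes $(A_n v_n)_i \sim (n/i)c_i$ directly, then feeds that into $A_n^T$, obtaining $(A_n^T A_n v_n)_j \sim n\sum_{k\mid j} c_k/k$; this is exactly the same formula the paper finds for $(B_n v_n)_j$, so the two computations coincide at leading order. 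The benefit of your route is that you never need to introduce $B_n$ or separately bound the replacement error from the first row and column of $A_n^T A_n$; the special cases $i=1$ and $j=1$ are absorbed automatically, since $(A_n v_n)_1 = \sum_{j\le n}\sigma(j)/j \sim c_1 n$ fits the general pattern and the $j=1$ entry of $A_n^T A_n v_n$ is $O(n\log n)$ and hence negligible against $n^{3/2}$. You also evaluate the constant $\sum_{d_1,d_2}\gcd(d_1,d_2)/(d_1 d_2)^2 = \zeta(2)^2\zeta(3)/\zeta(4) = 5\zeta(3)/2$ directly via $\gcd(d_1,d_2) = \sum_{e\mid \gcd}\phi(e)$ and Euler products, whereas the paper quotes Ramanujan's asymptotic $\sum_{k\le n}\sigma(k)^2 \sim \frac{5}{6}\zeta(3)n^3$ and derives the identity afterwards as a corollary; your direct computation is cleaner. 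On error control, your itemized concerns are the right ones to have; the per-entry error in $(A_n v_n)_i$ is $O(\log n)$ (essentially the paper's Lemma 5 with $\ell = i$ summed to $\lfloor n/i\rfloor$), and squaring and summing over $i\le n$ gives $O(n\log^2 n) = o(n^2)$, so (a) is fine. Note that in (b) the inner divisor sum $\sum_{k\mid j} w_k$ is never truncated for $j\le n$, since every divisor of $j$ is $\le j \le n$; the actual delicacy there is the size of the error $O(\sigma_0(j)\log n) = O_\varepsilon(n^\varepsilon)$ per entry, which the paper handles with the divisor bound and which would work identically here. Your sharper pointwise bound $c_i \le \zeta(2)\sigma(i)/i = O(\log\log i)$ is an improvement over the paper's crude $c_i \lesssim \log i$, though either suffices.
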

 Perhaps the most remarkable thing is (a) how close $v_n$ is to a singular vector and (b) that all the limits can actually be evaluated. The evaluation of the limits require some results from very classical analytical number theory including two identities stated by Ramanujan. It is an interesting question whether more can be said about the singular vector. We show, as part of the proof, that $A_n^T A_n$ can be decently well approximated by $(\sigma_0(\gcd(i,j)))_{i,j=1}^{n}$ with $\sigma_0(n)$ denoting the number of divisors of $n$. This type of matrix belongs to the class of generalized GCD matrices \cite{beslin, carlitz, haug, hong, jager, smith}. There, a phenomenon of this type if known: Aistleitner-Berkes-Seip-Weber \cite{aist} note that eigenvectors of suitable GCD matrices belonging to the maximal eigenvalue \textit{are concentrated on indices $k$ with many small prime factors} (with \cite{three, seventeen} being given as references). One might interpret our result as a way of precisely quantifying this phenomenon for $(\sigma_0(\gcd(i,j)))_{i,j=1}^{n}$.

\section{Proof}
The proof decouples into four different steps: it requires  
\begin{enumerate}
    \item understanding the behavior of $\|v_n\|$,
    \item simplifying $A_n^T A_n$ by replacing it with the matrix $B_n = (\sigma_0(\gcd(i,j)))_{i,j=1}^{n}$ and proving that the induced error is small enough for all subsequent steps,
    \item understanding the behavior of $\|B_n v\|$ 
    \item and understanding the behavior of the inner product $\left\langle v_n, B_n v_n \right\rangle$.
\end{enumerate}
We will derive all four parts in exactly this order.  In terms of notation, we will use $A \lesssim B$ if $A \leq cB$ for some universal constant $c>0$ (which is allowed to change its value from line to line). We use $A \gtrsim B$ in the same manner and $A \sim B$ will denote that both $A \lesssim B$ and $A \gtrsim B$. Norms $\|\cdot\|$ are always the Euclidean norm $\|\cdot\|_2$.

\subsection{Part 1. The size of $\|v_n\|$}
The first part of the argument follows from a stronger result of Ramanujan \cite{ramanujan}. Amusingly, $\zeta(3)$ makes an appearance.
\begin{lemma} \label{lem:1} We have
    $$\|v_n\|^2 =  \frac{5 \cdot \zeta(3) }{2} n + o(n).$$
\end{lemma}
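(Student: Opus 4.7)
The plan is to expand the square and rearrange, turning $\|v_n\|^2$ into a double divisor sum that can be linked to a classical Ramanujan identity. Writing $\sigma(k)/k = \sum_{d \mid k} 1/d$ and squaring, one obtains
$$ \|v_n\|^2 = \sum_{k=1}^{n} \biggl(\sum_{d \mid k} \frac{1}{d} \biggr)^2 = \sum_{k=1}^{n} \sum_{d_1 \mid k} \sum_{d_2 \mid k} \frac{1}{d_1 d_2}. $$
The joint divisibility condition ``$d_1 \mid k$ and $d_2 \mid k$'' is equivalent to ``$\lcm(d_1, d_2) \mid k$'', so swapping the order of summation yields
$$ \|v_n\|^2 = \sum_{d_1, d_2 \ge 1} \frac{1}{d_1 d_2} \left\lfloor \frac{n}{\lcm(d_1, d_2)} \right\rfloor. $$

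Next, I would split off the main term from the floor error. Using $\lcm(d_1, d_2) = d_1 d_2/\gcd(d_1, d_2)$, the heuristic main term is
$$ n \sum_{d_1, d_2 \ge 1} \frac{\gcd(d_1, d_2)}{d_1^2 d_2^2}, $$
and this infinite double sum is precisely the one that the Ramanujan-style identity
$$ \sum_{d_1, d_2 \ge 1} \frac{\gcd(d_1, d_2)}{d_1^s d_2^s} = \frac{\zeta(s)^2 \zeta(2s-1)}{\zeta(2s)} $$
evaluates at $s = 2$, giving $\zeta(2)^2 \zeta(3)/\zeta(4) = 5\zeta(3)/2$. This produces exactly the claimed leading constant. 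The identity itself is immediate from $\gcd(n,m) = \sum_{d \mid n,\, d \mid m} \phi(d)$ together with the Dirichlet series $\sum_d \phi(d)/d^{2s} = \zeta(2s-1)/\zeta(2s)$.

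The main obstacle is controlling the $o(n)$ error from the floor function, for which I would introduce the change of variables $d_1 = g a_1$, $d_2 = g a_2$ with $\gcd(a_1, a_2) = 1$, so that $\lcm(d_1,d_2) = g a_1 a_2$ and $\gcd(d_1,d_2)/(d_1^2 d_2^2) = 1/(g^3 a_1^2 a_2^2)$. The error then splits into two pieces. First, the \emph{tail} coming from pairs with $\lcm(d_1,d_2) > n$ contributes at most $n \sum_{g a_1 a_2 > n} 1/(g^3 a_1^2 a_2^2)$; the inner sum $\sum_{a_1 a_2 > M} 1/(a_1 a_2)^2$ is $O(M^{-1/2})$ (split at $a_1 = \sqrt{M}$), and summing $g^{-5/2}$ over $g$ gives a tail of size $O(\sqrt{n})$. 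Second, the \emph{rounding error} from replacing $\lfloor n/\lcm \rfloor$ by $n/\lcm$ whenever $\lcm \le n$ is bounded by $\sum_{\lcm(d_1,d_2) \le n} 1/(d_1 d_2)$; parametrizing again and using the standard estimate $\sum_{m \le M} \sigma_0(m)/m \sim \tfrac{1}{2}(\log M)^2$ yields $O((\log n)^2)$. Both errors are $o(n)$, so $\|v_n\|^2 = (5\zeta(3)/2)\, n + O(\sqrt{n})$, which is stronger than what the lemma asks for.
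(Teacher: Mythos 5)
Your proof is correct, but it takes a genuinely different route from the paper's. The paper treats Ramanujan's asymptotic $\sum_{k\leq n}\sigma(k)^2 = \tfrac{5}{6}\zeta(3)\,n^3 + \mathcal{O}(n^2\log^2 n)$ as a black box and then transfers it to $\sum \sigma(k)^2/k^2$ by summing over the window $(1-\varepsilon)n\leq k\leq n$, on which $k^2$ is approximately $n^2$, followed by a Taylor expansion in $\varepsilon$. You instead expand $\sigma(k)/k$ as a divisor sum, swap the order of summation to reach $\sum_{d_1,d_2}\lfloor n/\lcm(d_1,d_2)\rfloor/(d_1 d_2)$, evaluate the resulting infinite gcd sum in closed form via $\gcd(m,n)=\sum_{d\mid m,\,d\mid n}\phi(d)$ and the Dirichlet series $\sum_d \phi(d)/d^{2s}=\zeta(2s-1)/\zeta(2s)$, and then bound the tail and floor-rounding errors directly. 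Interestingly, this reverses the paper's logical order: the paper deduces the identity $\sum_{d_1,d_2}\gcd(d_1,d_2)/(d_1^2 d_2^2) = 5\zeta(3)/2$ as a Corollary of the lemma (using exactly the same swap-of-summation manipulation you employ), whereas you prove that identity independently by Dirichlet series and deduce the lemma from it. Your route is more self-contained (it does not need Ramanujan's estimate for $\sum\sigma(k)^2$) and yields the stronger error term $\mathcal{O}(\sqrt{n})$ in place of $o(n)$; all the bounds in your error analysis check out. The paper's route is shorter on the page precisely because it outsources the hard part to the cited reference.
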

\begin{proof}
 Ramanujan \cite{ramanujan} proved that
$$ \sum_{k\leq n} \sigma(k)^2 = \frac{5}{6} \zeta(3) n^3 + \mathcal{O}(n^2 \log^2{(n)}).$$
Smith \cite{smith} improved the error term to $\mathcal{O}(n^2 \log^{5/3}{(n)})$ but we do not require such precise estimates. For any $\varepsilon >0$  
$$ \sum_{(1-\varepsilon) n \leq k\leq n} \sigma(k)^2 = \frac{5}{6} \zeta(3) n^3 \left(1 - (1-\varepsilon)^3\right) + \mathcal{O}(n^2 \log^2{(n)}).$$
From this we deduce that
$$ X = \sum_{(1-\varepsilon) n \leq k\leq n} \frac{\sigma(k)^2}{k^2}$$
satisfies
$$   1 - (1-\varepsilon)^3 \leq  X   \cdot \frac{6}{5 \zeta(3)} \frac{1}{n}  + \mathcal{O}\left( \frac{\log^2{(n)}}{n}\right) \leq   \frac{\left(1 - (1-\varepsilon)^3\right)}{(1-\varepsilon)^2}.$$
A Taylor expansion shows
\begin{align*}
    1 - (1-\varepsilon)^3 &= 3 \varepsilon - 3 \varepsilon^2 + \mathcal{O}(\varepsilon^3) \\
    \frac{1 - (1-\varepsilon)^3}{(1-\varepsilon)^2} &= 3 \varepsilon + 3 \varepsilon^2 + \mathcal{O}(\varepsilon^3).
\end{align*}
This shows, for $n$ sufficiently large, that
$$ \frac{1}{\varepsilon n}  \sum_{(1-\varepsilon) n \leq k\leq n} \frac{\sigma(k)^2}{k^2} = \frac{5 \cdot \zeta(3)}{2} + \mathcal{O}(\varepsilon)$$
which implies the result.
\end{proof}

Lemma \ref{lem:1} implies a nice Corollary (which we will not use later).
\begin{corollary} We have
$$ \sum_{  d_1, d_2=1}^{\infty} \frac{\gcd(d_1, d_2)}{d_1^2 d_2^2} = \frac{5 \cdot \zeta(3)}{2}.$$
\end{corollary}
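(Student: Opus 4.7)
The plan is to compute $\|v_n\|^2$ in two different ways: Lemma~\ref{lem:1} already tells us its leading-order size, so it suffices to extract the double series on the left-hand side of the corollary from a direct divisor expansion of the same quantity.

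First, I would expand the square and swap the order of summation. The conditions $d_1 \mid k$ and $d_2 \mid k$ are jointly equivalent to $\lcm(d_1,d_2) \mid k$, so the number of admissible $k \leq n$ for a fixed pair $(d_1,d_2)$ is $\lfloor n / \lcm(d_1,d_2) \rfloor$. Using the identity $\lcm(d_1,d_2) = d_1 d_2 / \gcd(d_1,d_2)$, this rewrites as
\begin{align*}
\|v_n\|^2 = \sum_{k=1}^{n} \left(\sum_{d \mid k} \frac{1}{d}\right)^{\!2} = \sum_{d_1, d_2 \geq 1} \frac{1}{d_1 d_2} \left\lfloor \frac{n \cdot \gcd(d_1,d_2)}{d_1 d_2} \right\rfloor.
\end{align*}

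Next, I would divide by $n$ and pass to the limit. Termwise this formally yields the series $\sum_{d_1,d_2 \geq 1} \gcd(d_1,d_2)/(d_1^2 d_2^2)$, which combined with Lemma~\ref{lem:1} gives the claimed value $5\zeta(3)/2$. The interchange of limit and summation is the only step requiring any care: using $\gcd(d_1,d_2) \leq \min(d_1,d_2) \leq (d_1 d_2)^{1/2}$, each summand is dominated by $(d_1 d_2)^{-3/2}$, whose double sum is $\zeta(3/2)^2 < \infty$, so dominated convergence (applied to the floor approximation $\lfloor n\cdot\gcd/(d_1d_2)\rfloor/n \to \gcd(d_1,d_2)/(d_1d_2)$) applies. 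Since this bound is so benign, I do not anticipate any real obstacle; the argument is essentially bookkeeping on top of Lemma~\ref{lem:1}.
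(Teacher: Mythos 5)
Your proof is correct and follows essentially the same route as the paper's: both expand $\|v_n\|^2$ over divisor pairs, swap the summation order, rewrite the count as $\lfloor n/\lcm(d_1,d_2)\rfloor$, convert $\lcm$ to $\gcd$, and pass to the limit using Lemma~\ref{lem:1}. The only cosmetic difference is in how the floor-error and truncation are controlled; the paper bounds the error explicitly by $O(\sum_{d_1,d_2\le n}(d_1 d_2)^{-1}) = O(\log^2 n)$, whereas you invoke dominated convergence with the majorant $(d_1 d_2)^{-3/2}$, which is an equally valid (and arguably slightly cleaner) way of justifying the same limit interchange.
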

\begin{proof}
This follows from
    \begin{align*}
    \sum_{k=1}^{n} \frac{\sigma(k)^2}{k^2}  &=  \sum_{k=1}^{n} \left(\sum_{d |k} \frac{1}{d} \right)^2 =  \sum_{k=1}^{n} \sum_{d_1|k, d_2 |k} \frac{1}{d_1 d_2} \\
        &= \sum_{1 \leq d_1, d_2 \leq n} \frac{1}{d_1 d_2} \left\lfloor \frac{n}{\lcm(d_1, d_2)} \right\rfloor\\
        &= n\sum_{1 \leq d_1, d_2 \leq n} \frac{1}{d_1 d_2}  \frac{1}{\lcm(d_1, d_2)} + \mathcal{O}\left( \sum_{1 \leq d_1, d_2 \leq n} \frac{1}{d_1 d_2} \right).
    \end{align*}
Using
$$ \frac{1}{\lcm(d_1, d_2)} = \frac{\gcd(d_1, d_2)}{d_1 d_2}$$
    together with
    $$  \sum_{1 \leq d_1, d_2 \leq n} \frac{1}{d_1 d_2} = \left( \sum_{d=1}^{n} \frac{1}{d} \right)^2 \lesssim \log^2{n}$$
and letting $n \rightarrow \infty$ implies the statement.\end{proof}

\subsection{Part 2. Simplifying $A_n^T A_n$.}
We first deduce a formula for the entries of $A_n^T A_n$. $\sigma_0(n)$ will denote the number of divisors of $n$. The explicit formula can be used to argue that the simplified matrix $B_n = (\sigma_0(\gcd(i,j)))_{i,j=1}^{n}$ is close enough to the original matrix to continue the analysis.
\begin{lemma} \label{lem:2} The matrix $ A_n^T A_n$ satisfies
$$ ( A_n^T A_n)_{ij} = \begin{cases} n \qquad &\mbox{if}~(i,j)= (1,1) \\
\sigma_0(j) \qquad &\mbox{if} ~ j > i  = 1\\
\sigma_0(i) \qquad &\mbox{if} ~ i > j  = 1\\
\sigma_0(\gcd(i,j)) \qquad &\mbox{otherwise.}
\end{cases}$$
\end{lemma}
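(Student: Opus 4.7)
\subsection*{Proof proposal}

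The plan is to expand the matrix product directly from the definition. By definition,
\[
(A_n^T A_n)_{ij} \;=\; \sum_{k=1}^{n} A_{ki} A_{kj},
\]
and the key observation is that $A_{ki} = 1$ holds in exactly two mutually compatible situations: either $i = 1$ (so the first column of $A$ is all ones, i.e.\ the $k$-th entry of the $i$-th column is always $1$), or $k \mid i$. I would therefore split the analysis into four cases depending on whether $i$ and/or $j$ equals $1$, and in each case count the values of $k \in \{1,\dots,n\}$ for which $A_{ki} A_{kj} = 1$.

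For $i = j = 1$ every summand equals $1$, giving $n$. For $i = 1$ and $j > 1$, the factor $A_{k,1}$ is always $1$, so the sum reduces to the number of $k \in \{1,\dots,n\}$ with $k \mid j$; since $j \le n$, this count is exactly $\sigma_0(j)$. The case $j = 1, i > 1$ is symmetric. In the generic case $i, j > 1$, the product $A_{ki}A_{kj}$ equals $1$ iff $k \mid i$ and $k \mid j$, i.e.\ iff $k \mid \gcd(i,j)$; since $\gcd(i,j) \le \min(i,j) \le n$, every such divisor lies in $\{1,\dots,n\}$, so the count is $\sigma_0(\gcd(i,j))$.

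I do not anticipate any obstacle: the lemma is a direct bookkeeping exercise from the definition of $A_n$, and the only mild subtlety is that the asymmetric treatment of the first row/column of $A_n$ produces the special cases involving $\sigma_0(i)$ and $\sigma_0(j)$ rather than $\sigma_0(\gcd(i,1)) = \sigma_0(1) = 1$. These anomalies are confined to the first row and column of $A_n^T A_n$, which is why in Part 2 it will be justified to replace $A_n^T A_n$ by the simpler matrix $B_n = (\sigma_0(\gcd(i,j)))_{i,j}$ at the cost of a lower-order error.
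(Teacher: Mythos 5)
Your proof is correct and follows essentially the same route as the paper: expand $(A_n^T A_n)_{ij} = \sum_k A_{ki}A_{kj}$, split into the four cases according to whether $i$ and/or $j$ equals $1$, and count. The only addition is your explicit remark that the relevant divisors lie in $\{1,\dots,n\}$, which the paper leaves implicit.
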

\begin{proof}
Let $A_{i,j}$ be the $(i,j)$ entry of $A_n$. By matrix transposition, we have that $$( A_n^T A_n)_{ij}=\sum_{k=1}^n (A^T)_{i,k} A_{k,j} = \sum_{k=1}^n A_{k,i} A_{k,j}.$$
  If $i=j=1$, then $A_{k,i}=A_{k,j}=1$ for any value of $k$ and the sum is $n$. If $j>i=1$, $A_{k,i}=1$, and $A_{k,j}=1$ if and only if $k|j$ which happens $\sigma_0(j)$ many times. By symmetry, we also get the third case $i>j=1$. In the last case, for $i,j>1$, for any $k\in \{1,\ldots,n\}$, since the entries of $A$ are either 0 or 1, we have that $A_{k,i}A_{k,j}=1$ if and only if $A_{k,i}=1$ and $A_{k,j}=1$. By definition, this is equivalent to $k|i$ and $k|j$ and thus $k|\gcd(i,j)$ showing that $( A_n^T A_n)_{ij}$ is $\sigma_0(\gcd(i,j))$.
\end{proof}

This naturally suggests to focus on the simpler matrix
$$B_{n} = \left( \sigma_0(\gcd(i,j)\right)_{i,j=1}^{n}.$$
Lemma \ref{lem:3} shows that $B_n v_n$ and $A_n^T A_n v_n$ are fairly close.
\begin{lemma} \label{lem:3}
For all $1 \leq i \leq n$
$$ [(A_n^T A_n - B_n)v_n]_i \lesssim \begin{cases} 
n+\sum_{j=2}^n \frac{\sigma_0(j)\sigma(j)}{j} \qquad &\mbox{if}~ i=1\\
\sigma_0(i) \qquad &\mbox{otherwise.}
\end{cases}$$
Moreover, for any $\varepsilon>0$ there exist $C_{\varepsilon}$ so that for all $n \in \mathbb{N}$
    $$ \| (A_n^T A_n - B_n)v_n \| \leq C_{\varepsilon} n^{1+\varepsilon}.$$
\end{lemma}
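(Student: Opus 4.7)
The key structural observation is that, by Lemma \ref{lem:2}, the difference matrix $D := A_n^T A_n - B_n$ vanishes whenever $\min(i,j) > 1$; it is supported entirely on the first row and first column, with $D_{1,1} = n-1$ and $D_{1,j} = D_{j,1} = \sigma_0(j) - 1$ for $j > 1$. My plan is to exploit this very sparse support to compute $Dv_n$ coordinate by coordinate, and then derive the $\ell^2$ bound from classical divisor-sum estimates.

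The pointwise bounds are then completely mechanical. Since $(v_n)_1 = 1$ and $(v_n)_j = \sigma(j)/j$, for $i = 1$ one has
$$ [Dv_n]_1 \;=\; (n-1) + \sum_{j=2}^{n} \bigl(\sigma_0(j) - 1\bigr)\frac{\sigma(j)}{j} \;\lesssim\; n + \sum_{j=2}^n \frac{\sigma_0(j)\sigma(j)}{j}, $$
while for $i > 1$ only the entry $D_{i,1}$ contributes, giving
$$ [Dv_n]_i \;=\; \bigl(\sigma_0(i) - 1\bigr) \cdot (v_n)_1 \;\lesssim\; \sigma_0(i), $$
which is exactly the stated case analysis.

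For the global norm bound I would split
$$ \|Dv_n\|^2 \;=\; [Dv_n]_1^2 + \sum_{i=2}^n [Dv_n]_i^2. $$
The tail is controlled by the classical Ramanujan-type estimate $\sum_{i \leq n} \sigma_0(i)^2 \lesssim n \log^3 n$, which is far below the target $n^{2+2\varepsilon}$. The first coordinate is the dominant term, and controlling it is the only (mild) obstacle: I need to bound $\sum_{j \leq n} \sigma_0(j)\sigma(j)/j$. Here the elementary pointwise inequality
$$ \frac{\sigma(j)}{j} \;=\; \sum_{d|j} \frac{1}{d} \;\leq\; \sigma_0(j) $$
collapses the sum into $\sum_{j \leq n} \sigma_0(j)^2 \lesssim n \log^3 n$ once more, giving $[Dv_n]_1 \lesssim n \log^3 n$. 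Combining, $\|Dv_n\| \lesssim n \log^3 n$, which is dominated by $C_\varepsilon n^{1+\varepsilon}$ for any $\varepsilon > 0$. The reason to phrase the conclusion with an $n^{\varepsilon}$ loss rather than a logarithmic factor is simply that subsequent parts of the proof require only this weaker form.
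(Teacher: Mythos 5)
Your proof is correct and follows the same overall structure as the paper's: isolate the sparse support of $A_n^T A_n - B_n$ on the first row and column via Lemma \ref{lem:2}, compute the coordinates of $Dv_n$ explicitly, and then control the $\ell^2$ norm via a divisor-sum estimate. The one place you diverge is in bounding $\sum_{j\leq n} \sigma_0(j)\sigma(j)/j$: the paper pulls out $\sigma_0(j)-1 = o(n^{\varepsilon})$ uniformly and then uses the classical asymptotic $\sum_{j\leq n}\sigma(j)/j \sim \tfrac{\pi^2}{6}n$, whereas you observe the pointwise inequality $\sigma(j)/j = \sum_{d|j}1/d \leq \sigma_0(j)$ and collapse everything into Wilson's estimate $\sum_{j\leq n}\sigma_0(j)^2 \lesssim n\log^3 n$, which you already need for the tail. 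Your route is slightly more economical (one estimate instead of two) and yields the sharper bound $\|Dv_n\| \lesssim n\log^3 n$ before weakening to $n^{1+\varepsilon}$, but both arguments are elementary and land in the same place.
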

\begin{proof}
Let $x=(A_n^T A_n - B_n)v_n$. Lemma \ref{lem:2} shows that the matrix $A_n^T A_n - B_n$ has nonzero entries only in the first row and the first column. The first row shows that when $2 \leq i \leq n$ we have $x_i=\sigma_0(i)-1$.
A computation shows
\begin{align*}
    x_1 &= n-1  + \sum_{j=2}^{n} (\sigma_0(j) - 1) \frac{\sigma(j)}{j}.
\end{align*}
 This shows the first part of the statement. For the second part of the statement, we argue as follows:
for any $\varepsilon > 0$ we have $\sigma_0(n) = o(n^{\varepsilon})$. Combining this with a standard asymptotic (a refined version is proved in Lemma \ref{lem:5}), we have
\begin{align*}
    \sum_{j=2}^{n} (\sigma_0(j) - 1) \frac{\sigma(j)}{j} &= o(n^{\varepsilon}) \sum_{j=1}^{n}  \frac{\sigma(j)}{j} = o(n^{\varepsilon}) \left( \frac{\pi^2}{6} n + \log{(n)}\right).
\end{align*}
This shows that the contribution of the first row is $\mathcal{O}_{\varepsilon}(n^{1+\varepsilon})$. It remains to show that the contribution from all the other rows is small. Here, we use yet another formula of Ramanujan (proven by Wilson \cite{wilson})
$$ \sum_{i=1}^{n} \sigma_0(i)^2 = \frac{1 + o(1)}{\pi^2} \cdot n\cdot (\log{n})^3 = \mathcal{O}_{\varepsilon}(n^{1+\varepsilon}). $$
\end{proof}

\subsection{Part 3. Understanding $B_n v_n$.}
Looking at a single row, we have
$$ (B_n v_n)_i = \sum_{k=1}^{n} \sigma_0(\gcd(i,k)) \frac{\sigma_1(k)}{k}.$$
The first observation is that this term can be rewritten.
\begin{lemma} \label{lem:4}
    We have, for any $1 \leq i \leq n$,
 $$    \sum_{k=1}^{n} \sigma_0(\gcd(i,k)) \frac{\sigma_1(k)}{k} = \sum_{d|i} \sum_{k=1}^{\left\lfloor n/d \right\rfloor}  \frac{\sigma_1(k d)}{k d}$$
\end{lemma}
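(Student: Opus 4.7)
The plan is a direct manipulation of the double sum using the standard identity $\sigma_0(m) = \sum_{d \mid m} 1$. Writing
$$ \sigma_0(\gcd(i,k)) = \sum_{d \mid \gcd(i,k)} 1 = \sum_{\substack{d \mid i \\ d \mid k}} 1, $$
the left-hand side becomes
$$ \sum_{k=1}^{n} \sigma_0(\gcd(i,k)) \frac{\sigma_1(k)}{k} = \sum_{k=1}^{n} \sum_{\substack{d\mid i \\ d \mid k}} \frac{\sigma_1(k)}{k}. $$

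Next I would swap the order of summation. Since $d \mid i$ does not depend on $k$, the outer variable becomes $d$ ranging over divisors of $i$, while the inner variable $k$ runs over those $1 \leq k \leq n$ that are multiples of $d$. Substituting $k = d\ell$ then reparametrizes the inner sum: the range $1 \leq k \leq n$ with $d \mid k$ becomes $1 \leq \ell \leq \lfloor n/d \rfloor$, and the summand transforms into $\sigma_1(d\ell)/(d\ell)$. Renaming $\ell$ as $k$ yields exactly the right-hand side.

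There is no serious obstacle here; the lemma is a routine interchange of divisor sums. Its purpose is strategic rather than technical: by pulling the divisors of $i$ to the outside, the expression $(B_n v_n)_i$ is recast as a sum over $d \mid i$ of a "tail" of $\sum_k \sigma_1(k)/k$ along an arithmetic progression, which is the natural form for the subsequent asymptotic analysis in Parts 3 and 4 (where inner products and norms will be expanded into double and triple sums indexed by divisors, eventually producing the $\gcd$-weighted series appearing in the statement of the theorem).
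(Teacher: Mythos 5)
Your proof is correct and follows essentially the same route as the paper: expand $\sigma_0(\gcd(i,k))$ as a sum over common divisors of $i$ and $k$, swap the order of summation, and reparametrize the inner sum over multiples of $d$. No meaningful difference in approach.
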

\begin{proof} A number divides $\gcd(i,k)$ if and only if it both divides $i$ and $k$, therefore
  $$\sum_{k=1}^{n} \sigma_0(\gcd(i,k)) \frac{\sigma_1(k)}{k}=\sum_{k=1}^{n} \left(\sum_{d|i ~\mbox{\tiny and}~ d|k} 1\right) \frac{\sigma_1(k)}{k}.$$
 Recalling that $i$ is fixed, we may rewrite the expression as
$$ \sum_{d|i ~\mbox{\tiny and}~ d|k} 1 = \sum_{d|i} \mathbf{1}_{d|k}.$$
Changing the order of summation,
 \begin{align*}
     \sum_{k=1}^{n} \left(\sum_{d|i ~\mbox{\tiny and}~ d|k} 1\right) \frac{\sigma_1(k)}{k}&= \sum_{k=1}^{n} \sum_{d|i } \mathbf{1}_{d|k}  \frac{\sigma_1(k)}{k}\\
     &=\sum_{d|i }\sum_{k=1}^{n} \mathbf{1}_{d|k}  \frac{\sigma_1(k)}{k}= \sum_{d|i} \sum_{j=1}^{\lfloor n/d \rfloor}\frac{\sigma_1(j d)}{j d}.
 \end{align*}
\end{proof}
The next step consists of finding an asymptotic result for the expression that naturally arose in Lemma \ref{lem:4}. The special case $\ell =1$ has already been used above in the proof of Lemma \ref{lem:3}.
Lemma \ref{lem:5} is by no means original: the case $\ell = 1$ is completely classical with more precise results for the error term having been given by Walfisz \cite{wal} and P\'etermann \cite{peter0, peter1, peter2}.
We have not been able to find the case $\ell > 1$ in the literature but the usual elementary argument works.

\begin{lemma} \label{lem:5}
    We have, for all $\ell,n \in \mathbb{N}$, that
    $$  \sum_{k=1}^{n} \frac{\sigma_1(\ell k)}{\ell k}  = n \sum_{d=1}^{\infty}  \frac{ \gcd(d,\ell)}{d^2} + \mathcal{O}(\log(\ell n)).$$
\end{lemma}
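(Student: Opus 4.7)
The plan is to start from the divisor expansion $\sigma_1(m)/m = \sum_{d\mid m} 1/d$, swap the order of summation, and then carefully compare a floor function to its unrounded value, tracking the $\ell$-dependence in the error.

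Concretely, I would first write
$$\sum_{k=1}^{n} \frac{\sigma_1(\ell k)}{\ell k} = \sum_{k=1}^{n} \sum_{d \mid \ell k} \frac{1}{d} = \sum_{d=1}^{\infty} \frac{1}{d}\,\#\bigl\{1\leq k\leq n : d \mid \ell k\bigr\}.$$
The key arithmetic step is the observation that $d \mid \ell k$ if and only if $d/\gcd(d,\ell)$ divides $k$ (since $d/\gcd(d,\ell)$ is coprime to $\ell/\gcd(d,\ell)$). Hence
$$\#\bigl\{k \leq n : d\mid \ell k\bigr\} = \left\lfloor \frac{n\,\gcd(d,\ell)}{d}\right\rfloor,$$
and the sum becomes $\sum_{d\geq 1} d^{-1}\lfloor n\gcd(d,\ell)/d\rfloor$.

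The next step is to replace the floor by its argument. Because $\gcd(d,\ell)\leq \ell$, the floor vanishes whenever $d > n\ell$, so the series is truly supported on $d\leq n\ell$. Writing $\lfloor x \rfloor = x - \{x\}$ gives
$$\sum_{d=1}^{n\ell} \frac{1}{d}\left\lfloor \frac{n\gcd(d,\ell)}{d}\right\rfloor = n\sum_{d=1}^{n\ell} \frac{\gcd(d,\ell)}{d^2} - \sum_{d=1}^{n\ell} \frac{1}{d}\left\{\frac{n\gcd(d,\ell)}{d}\right\}.$$
The fractional-part error is bounded by $\sum_{d=1}^{n\ell} 1/d = \mathcal{O}(\log(n\ell))$, which is exactly the error term we want.

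It remains to extend the main term to an infinite sum. The tail
$$n\sum_{d>n\ell} \frac{\gcd(d,\ell)}{d^2} \leq n\ell \sum_{d>n\ell} \frac{1}{d^2} = \mathcal{O}(1),$$
so we may freely add it back, obtaining
$$\sum_{k=1}^{n} \frac{\sigma_1(\ell k)}{\ell k} = n\sum_{d=1}^{\infty} \frac{\gcd(d,\ell)}{d^2} + \mathcal{O}(\log(\ell n)),$$
as claimed. The only mild obstacle is making sure the $\ell$-dependence in the error is not amplified; because the fractional parts are bounded uniformly by $1$, the range of nonzero terms $d\leq n\ell$ produces only a harmonic sum, yielding the clean $\log(\ell n)$ bound without any divisor-function losses in $\ell$.
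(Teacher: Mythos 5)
Your argument is correct and follows the same route as the paper's proof: expand $\sigma_1(\ell k)/(\ell k)$ as a sum over divisors, swap the order of summation, count $\{k\leq n : d\mid \ell k\}$ via the floor $\lfloor n\gcd(d,\ell)/d\rfloor$, drop the floor at the cost of a harmonic-sum error, and extend the resulting main sum to infinity with an $\mathcal{O}(1)$ tail. The only cosmetic difference is that you make the fractional-part bookkeeping explicit, while the paper folds it into an $\mathcal{O}(\cdot)$ term.
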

\begin{proof} We start by rewriting
 \begin{align*}
      \sum_{k=1}^{n} \frac{\sigma_1(\ell k)}{\ell k} = \sum_{k=1}^{n} \sum_{d|\ell k} \frac{1}{d} =\sum_{d=1}^{\ell n} \frac{1}{d} \cdot \# \left\{ k \leq n: d|\ell k \right\}
 \end{align*}
 $d$ divides $\ell k$ if and only if $k$ is a multiple of $d/\gcd(d, \ell)$. Therefore
 $$ \# \left\{ k \leq n: d|\ell k \right\} = \left\lfloor \frac{n \gcd(d,\ell)}{d} \right\rfloor $$
and thus
 \begin{align*}
      \sum_{k=1}^{n} \frac{\sigma_1(\ell k)}{\ell k} &=  \sum_{d=1}^{\ell n} \frac{1}{d} \cdot  \left\lfloor \frac{n \gcd(d,\ell)}{d} \right\rfloor = \sum_{d=1}^{\ell n} \frac{1}{d} \cdot  \frac{n \gcd(d,\ell)}{d} + \mathcal{O}\left( \sum_{d=1}^{\ell n} \frac{1}{d}\right) \\
      &= n \sum_{d=1}^{\ell n}  \frac{ \gcd(d,\ell)}{d^2} + \mathcal{O}(\log(\ell n)).
 \end{align*}
The sum can be made unbounded since
\begin{align*}
   n \sum_{d=1}^{\ell n}  \frac{ \gcd(d,\ell)}{d^2} =  n \sum_{d=1}^{\infty}  \frac{ \gcd(d,\ell)}{d^2} + \mathcal{O}\left(  n \sum_{d=\ell n +1}^{\infty }  \frac{ \gcd(d,\ell)}{d^2} \right)
\end{align*}
where the trivial estimate $\gcd(d,\ell) \leq \ell$, shows that the error is small
$$ n \sum_{d=\ell n +1}^{\infty }  \frac{ \gcd(d,\ell)}{d^2}  \leq \ell n\sum_{d=\ell n +1}^{\infty }  \frac{1}{d^2}  \lesssim 1.$$
\end{proof}

Lemma \ref{lem:5} motivates introducing the abbreviation
$$ c_{\ell} = \sum_{d=1}^{\infty}  \frac{ \gcd(d,\ell)}{d^2}.$$
This object will show up in many of the subsequent arguments (as well as in the statement of our main result).
There seems to be a folklore identity
$$ c_{\ell} = \sum_{d=1}^{\infty}  \frac{ \gcd(d,\ell)}{d^2} =\frac{\zeta(2)}{\ell^2} \sum_{d|\ell} d^2 \phi\left( \frac{\ell}{d} \right) = \frac{\zeta(2)}{\ell^2}\sum_{d=1}^{\ell} \gcd(\ell, d)^2
$$
whose origin we were unable to trace (and which is not needed for subsequent steps); the equivalence of the second and the third term follows from an old result of C\'esaro \cite{cesaro}.
We note that $c_{\ell} \geq 1$. An easy bound shows that it is never very large
\begin{align*}
    c_{\ell} = \sum_{d=1}^{\infty}  \frac{ \gcd(d,\ell)}{d^2} &= \sum_{d \leq \ell}^{}  \frac{ \gcd(d,\ell)}{d^2} + \sum_{d = \ell +1}^{\infty}  \frac{ \gcd(d,\ell)}{d^2} \\
    &\leq \sum_{d \leq \ell}^{}  \frac{d}{d^2}  +  \sum_{d = \ell +1}^{\infty}  \frac{ \ell}{d^2} \lesssim \log{\ell}.
\end{align*}
However, this easy estimate is clearly pessimistic and the subsequent arguments will never require a point-wise estimate. On average, $c_{\ell}$ is $\sim 1$.

\begin{lemma} \label{lem:dyadic}
There exists a constant $c>0$ such that for all $X \geq 1$
    $$ \sum_{X \leq \ell \leq 2X} c_{\ell} \leq c X.$$
\end{lemma}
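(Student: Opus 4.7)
The natural approach is to swap the order of summation and then estimate the resulting inner sum in two different regimes. Writing
$$\sum_{X \leq \ell \leq 2X} c_\ell \;=\; \sum_{d=1}^{\infty} \frac{1}{d^2} \sum_{X \leq \ell \leq 2X} \gcd(d,\ell) \;=:\; \sum_{d=1}^{\infty} \frac{S_d(X)}{d^2},$$
the plan is to bound $S_d(X)$ separately for $d \leq 2X$ and for $d > 2X$, and to check that each regime contributes $\mathcal{O}(X)$.

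For $d \leq 2X$, I would use the classical identity $\gcd(d,\ell) = \sum_{g\mid d,\, g\mid \ell} \phi(g)$ (which follows from $\sum_{g\mid n}\phi(g) = n$ applied with $n = \gcd(d,\ell)$) and then interchange summation once more to obtain
$$S_d(X) \;=\; \sum_{g\mid d} \phi(g) \cdot \#\{\ell \in [X,2X] : g\mid \ell\} \;\leq\; \sum_{g\mid d} \phi(g)\left(\frac{X}{g}+1\right) \;\leq\; X\, \sigma_0(d) + d,$$
using $\phi(g)/g \leq 1$ together with $\#\{g:g\mid d\} = \sigma_0(d)$ and $\sum_{g\mid d}\phi(g) = d$. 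Dividing by $d^2$ and summing then gives
$$\sum_{d\leq 2X} \frac{S_d(X)}{d^2} \;\leq\; X \sum_{d=1}^{\infty} \frac{\sigma_0(d)}{d^2} + \sum_{d\leq 2X} \frac{1}{d} \;=\; X\,\zeta(2)^2 + \mathcal{O}(\log X) \;=\; \mathcal{O}(X),$$
whose essential input is the convergence of the Dirichlet series $\sum_{d\geq 1}\sigma_0(d)/d^2 = \zeta(2)^2$.

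For $d > 2X$, the pointwise bound $\gcd(d,\ell) \leq \ell \leq 2X$ combined with the fact that $[X,2X]$ contains at most $X+1$ integers yields $S_d(X) \lesssim X^2$, so the tail contributes at most
$$\sum_{d > 2X} \frac{S_d(X)}{d^2} \;\lesssim\; X^2 \sum_{d > 2X} \frac{1}{d^2} \;\lesssim\; X,$$
and combining the two regimes finishes the proof. I do not expect any genuine obstacle: the statement is really a first-moment (averaging) estimate on $c_\ell$ and the heuristic $c_\ell \sim 1$ on average already predicts the answer $cX$. The only point to be careful about is using two different bounds on $S_d(X)$: the small-$d$ estimate contains a $+d$ term whose contribution $\sum 1/d$ would diverge if extended to all $d$, and the trivial large-$d$ estimate is far too wasteful for small $d$.
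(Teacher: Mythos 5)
Your proof is correct. The overall architecture — swap the order of summation, split the $d$-range into ``small'' and ``large,'' use the trivial bound $\gcd(d,\ell)\leq\ell$ for large $d$ — is exactly the paper's. Where you diverge is in the small-$d$ estimate. The paper views $\ell\mapsto\gcd(d,\ell)$ as a $d$-periodic sequence, sums over $\lesssim X/d+1$ periods, and bounds the period sum by Pillai's function $P(d)=\sum_{k=1}^{d}\gcd(k,d)$, which is in turn controlled via C\'esaro's identity $P(d)=\sum_{e\mid d}e\,\phi(d/e)$ and the crude bound $P(d)=\mathcal{O}_\varepsilon(d^{1+\varepsilon})$. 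You instead expand $\gcd(d,\ell)=\sum_{g\mid d,\,g\mid \ell}\phi(g)$ directly and count multiples of $g$ in $[X,2X]$, arriving at $S_d(X)\leq X\sigma_0(d)+d$ and the convergent series $\sum_d\sigma_0(d)/d^2=\zeta(2)^2$. This is a genuinely cleaner packaging: it sidesteps Pillai's function and the external reference entirely, yields an explicit constant, and in fact gives a slightly tighter bound, since rounding to whole periods (as the paper does) effectively replaces the exact count $\lfloor X/g\rfloor+O(1)$ by the larger $(X/d+1)\cdot d/g = X/g + d/g$. Both versions are fully elementary; yours is preferable for self-containedness.
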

\begin{proof}
    Exchanging the order of summation, we have
    \begin{align*}
\sum_{X \leq \ell \leq 2X} c_{\ell} &= \sum_{X \leq \ell \leq 2X} \sum_{d=1}^{\infty}  \frac{ \gcd(d,\ell)}{d^2} = \sum_{d=1}^{\infty}  \frac{1}{d^2} \sum_{X \leq \ell \leq 2X}  \gcd(d,\ell) \\
&= \sum_{d = 1} ^{X}  \frac{1}{d^2} \sum_{X \leq \ell \leq 2X}  \gcd(d,\ell) + \sum_{d = X + 1}^{\infty}  \frac{1}{d^2} \sum_{X \leq \ell \leq 2X}  \gcd(d,\ell).
    \end{align*}
Using the trivial bound $\gcd(d,\ell) \leq \ell$ shows that the second sum is at most linear
    \begin{align*}
\sum_{X \leq \ell \leq 2X} c_{\ell} &\leq \sum_{d =1 } ^{X}  \frac{1}{d^2} \sum_{X \leq \ell \leq 2X }  \gcd(d,\ell) + \sum_{d = X+1}^{\infty}  \frac{X \cdot 2X}{d^2} \\
&\lesssim \sum_{d =1}^{X}  \frac{1}{d^2} \sum_{X \leq \ell \leq 2X}  \gcd(d,\ell) + X.
    \end{align*}

    It remains to bound the first sum. For fixed $d \in \mathbb{N}$, the sequence $\ell \rightarrow \gcd(d,\ell)$ is $d-$periodic. Moreover, the sum over one period is Pillai's \cite{pillai}  arithmetic function $P(d)$ for which an identity of  C\'esaro \cite{cesaro} implies  that
    $$ P(d) = \sum_{k=1}^{d} \gcd(k,d) = \sum_{e | d} e \cdot \phi\left( \frac{d}{e} \right),$$
    where $\phi$ is the Euler totient function.
Crude estimates ($\sigma_{0}(d) = \mathcal{O}_{\varepsilon}(d^{\varepsilon})$ and $\phi(n) \leq n$) show that $P(d)$ is not much larger than linear
$$ P(d) = \sum_{e | d} e \cdot \phi\left( \frac{d}{e} \right) \leq \sum_{e | d} d = \mathcal{O}_{\varepsilon}(d^{1+\varepsilon}).$$
Therefore, since we sum over $\lesssim X/d + 1$ such periods,
\begin{align*}
    \sum_{d = 1}^{X}  \frac{1}{d^2} \sum_{X \leq \ell \leq 2X}  \gcd(d,\ell) &\leq     \sum_{d = 1}^{X}  \frac{1}{d^2}  \left(\frac{X}{d} + 1\right)P(d) \\
    &\leq X \sum_{d =1 }^{X}  \frac{P(d)}{d^3} + \sum_{d =1}^{X} \frac{P(d)}{d^2} \\
    &\lesssim X + \sum_{d=1}^{X} \frac{d^{3/2}}{d^2} \lesssim X + \sqrt{X} \lesssim X.
\end{align*}
\end{proof}
This estimate immediately implies estimates along the lines of
$$ \sum_{\ell = 1}^{X} \frac{c_{\ell}}{\ell} \lesssim \log{X} \qquad \mbox{and} \qquad  \sum_{\ell = 1}^{X} \frac{c_{\ell}}{\ell^2} \lesssim 1$$
by splitting the summation interval into dyadic intervals and repeatedly applying Lemma \ref{lem:dyadic}. This will frequently be used in the last part of the argument. Combining Lemma \ref{lem:4} and Lemma \ref{lem:5} now allows us to conclude the main result of this subsection.
\begin{lemma} \label{lem:6} For any $\varepsilon >0$ and all $1 \leq i \leq n$,
 $$  (B_n v_n)_{i} =  \sum_{k=1}^{n} \sigma_0(\gcd(i,k)) \frac{\sigma_1(k)}{k} =  n \sum_{d|i} \frac{c_d}{d} + \mathcal{O}_{\varepsilon}(n^{\varepsilon}).$$
\end{lemma}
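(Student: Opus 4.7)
The plan is to combine Lemma \ref{lem:4} and Lemma \ref{lem:5} in the obvious way, then verify that the accumulated error remains $\mathcal{O}_{\varepsilon}(n^{\varepsilon})$ after summing over divisors of $i$.

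First I would start from the identity of Lemma \ref{lem:4}, which rewrites
$$ (B_n v_n)_i = \sum_{d \mid i} \sum_{k=1}^{\lfloor n/d \rfloor} \frac{\sigma_1(dk)}{dk}.$$
To each inner sum I apply Lemma \ref{lem:5} with $\ell = d$ and upper summation index $N = \lfloor n/d \rfloor$. This yields
$$ \sum_{k=1}^{\lfloor n/d \rfloor} \frac{\sigma_1(dk)}{dk} \;=\; \left\lfloor \frac{n}{d} \right\rfloor c_d \;+\; \mathcal{O}\!\left( \log\bigl(d \lfloor n/d \rfloor \bigr) \right) \;=\; \frac{n c_d}{d} \;+\; \mathcal{O}(c_d) \;+\; \mathcal{O}(\log n),$$
where I have used $\lfloor n/d \rfloor = n/d + \mathcal{O}(1)$ and $d \lfloor n/d \rfloor \leq n$.

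Summing this expansion over $d \mid i$ gives the main term $n \sum_{d \mid i} c_d/d$ together with an error controlled by $\sum_{d \mid i} (c_d + \log n)$. To bound this error I would invoke two elementary ingredients: the divisor bound $\sigma_0(i) = \mathcal{O}_{\varepsilon}(i^{\varepsilon}) = \mathcal{O}_{\varepsilon}(n^{\varepsilon})$, and the pointwise estimate $c_d \lesssim \log d \lesssim \log n$ which was established just before Lemma \ref{lem:dyadic}. Combining the two,
$$ \sum_{d \mid i} c_d + \sigma_0(i)\log n \;\lesssim\; \sigma_0(i)\log n \;=\; \mathcal{O}_{\varepsilon}(n^{\varepsilon}),$$
after absorbing the logarithmic factor into a slightly enlarged $\varepsilon$. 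This yields exactly the claimed asymptotic.

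The only potentially delicate point is ensuring that the error from Lemma \ref{lem:5} really is acceptable uniformly over all $d \mid i$ and over all $1 \le i \le n$; but since the $\mathcal{O}(\log(\ell n))$ in Lemma \ref{lem:5} is $\mathcal{O}(\log n)$ whenever $\ell \le n$ (which holds for every $d \mid i \le n$), no divisor $d$ contributes worse than $\log n$, and the total is bottlenecked by the divisor count $\sigma_0(i)$. No average-of-$c_d$ estimate such as Lemma \ref{lem:dyadic} is needed here, since we are summing over the divisors of a single index $i$ rather than over a full range; the crude bound $c_d \lesssim \log d$ already suffices.
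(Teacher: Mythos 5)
Your proof is correct and follows essentially the same route as the paper: apply Lemma~\ref{lem:4}, then Lemma~\ref{lem:5} termwise with $\ell = d$, replace $\lfloor n/d \rfloor$ by $n/d + \mathcal{O}(1)$, and control the accumulated error by the divisor bound $\sigma_0(i) = \mathcal{O}_{\varepsilon}(n^{\varepsilon})$ together with the crude pointwise estimate $c_d \lesssim \log d$. The one small refinement you add, namely noting $d\lfloor n/d\rfloor \le n$ so the Lemma~\ref{lem:5} error is uniformly $\mathcal{O}(\log n)$, is a minor tightening of a point the paper glosses over, but the argument is otherwise the same.
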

\begin{proof}
Lemma \ref{lem:4} and Lemma \ref{lem:5} give
\begin{align*}
         \sum_{k=1}^{n} \sigma_0(\gcd(i,k)) \frac{\sigma_1(k)}{k} &= \sum_{d|i} \sum_{k=1}^{\left\lfloor n/d \right\rfloor}  \frac{\sigma_1(k d)}{k d} =\sum_{d|i} \left( c_d \left\lfloor \frac{n}{d}\right\rfloor + \mathcal{O}(\log(n)) \right).
\end{align*}
We have
\begin{align*}
    \sum_{d|i} c_d \left\lfloor \frac{n}{d}\right\rfloor &=   \sum_{d|i} c_d  \frac{n}{d} + \mathcal{O}\left(   \sum_{d|i} c_d \rfloor\right) = n\sum_{d|i}   \frac{c_d}{d} +  \mathcal{O}\left(   \sum_{d|i} \log{d}  \right) \\
    &= n\sum_{d|i}   \frac{c_d}{d} +  \mathcal{O}\left(  \sigma_0(i) \log{i} \right)
\end{align*}
Since each number $k \leq n$ has $ \sigma_0(k)\leq c_{\varepsilon} n^{\varepsilon}$ divisors, this error term is $\mathcal{O}_{\varepsilon}\left( n^{\varepsilon}\right)$ and the same is true for the other error term since
$$ \sum_{d|i} \mathcal{O}(\log(n)) = \mathcal{O}_{\varepsilon}\left( n^{\varepsilon/2} \log{n}\right) = \mathcal{O}_{\varepsilon}\left( n^{\varepsilon}\right).$$
\end{proof}

\subsection{Norm of $\|B_n v_n\|$} The asymptotic expansion derived in the previous section for the behavior of a single row can now be combined to derive an asymptotic expression for the asymptotic behavior of $\|B_n v_n\|$. 
\begin{lemma} \label{lem:8}
We have
$$ \| B_n v_n\| = n^{3/2} \left( \sum_{ d_1, d_2=1}^{\infty}\frac{c_{d_1} c_{d_2} \gcd(d_1, d_2)}{d_1^2 d_2^2} \right)^{1/2} + \mathcal{O}_{\varepsilon}\left( n^{1/2 + \varepsilon}\right).$$
\end{lemma}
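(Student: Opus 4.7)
The plan is to combine Lemma~\ref{lem:6} with a direct expansion of the squared norm and a careful interchange of summation. Writing $(B_n v_n)_i = n f(i) + E_i$ with $f(i) = \sum_{d\mid i} c_d/d$ and $|E_i| \leq C_\varepsilon n^\varepsilon$, the expansion
\[
\|B_n v_n\|^2 = n^2\sum_{i=1}^n f(i)^2 + 2n\sum_{i=1}^n f(i)E_i + \sum_{i=1}^n E_i^2
\]
splits into a main term and two error terms that I handle separately.

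The error terms go first. Swapping sums, $\sum_i f(i) = \sum_d (c_d/d)\lfloor n/d\rfloor \leq n \sum_d c_d/d^2 \lesssim n$ by the dyadic estimate stated after Lemma~\ref{lem:dyadic}; hence $|2n\sum_i f(i)E_i| \lesssim n^{2+\varepsilon}$ and $\sum_i E_i^2 \lesssim n^{1+2\varepsilon}$, both absorbable into the target $O_\varepsilon(n^{2+\varepsilon})$ error on $\|B_n v_n\|^2$. For the main term, swapping the order of summation and using $\gcd(d_1,d_2)\cdot\lcm(d_1,d_2) = d_1 d_2$ gives
\[
\sum_{i=1}^n f(i)^2 = \sum_{d_1,d_2} \frac{c_{d_1} c_{d_2}}{d_1 d_2}\Big\lfloor \frac{n}{\lcm(d_1,d_2)}\Big\rfloor = n\!\!\sum_{\lcm(d_1,d_2)\leq n}\!\!\frac{c_{d_1} c_{d_2} \gcd(d_1,d_2)}{d_1^2 d_2^2} + O(\log^2 n),
\]
the floor error being bounded by $\sum_{\lcm\leq n} c_{d_1} c_{d_2}/(d_1 d_2) \leq (\sum_{d\leq n} c_d/d)^2 \lesssim \log^2 n$.

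The main obstacle is to replace the truncated sum $\sum_{\lcm\leq n}$ by the full infinite series $S$: the tail $T_n = \sum_{\lcm(d_1,d_2) > n} c_{d_1} c_{d_2} \gcd(d_1,d_2)/(d_1^2 d_2^2)$ must be small enough that $nT_n$ fits inside the target $O_\varepsilon(n^{2+\varepsilon})$ error. I would decompose dyadically into blocks $d_1 \in [2^i, 2^{i+1})$, $d_2 \in [2^j, 2^{j+1})$; since $\lcm(d_1,d_2) > n$ forces $d_1 d_2 > n$, only blocks with $i+j \gtrsim \log_2 n$ contribute, and Lemma~\ref{lem:dyadic} controls the averaged $c$-values on each block while the identity $\gcd/(d_1^2 d_2^2) = 1/(d_1 d_2 \lcm)$ supplies an extra factor $1/n$ on the tail. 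Summing the resulting geometric series in $i+j$ yields the polynomial savings needed. Once $\|B_n v_n\|^2 = n^3 S + O_\varepsilon(n^{2+\varepsilon})$ is established, the conclusion follows from the Taylor expansion $\sqrt{n^3 S + E} = n^{3/2}\sqrt{S} + O(E/n^{3/2})$.
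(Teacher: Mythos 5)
Your overall architecture matches the paper's: apply Lemma~\ref{lem:6}, square, swap the order of summation, replace the floor, and extend the double sum to infinity, then take the square root via a mean-value/Taylor step. The error-term handling ($2n\sum f(i)E_i$ and $\sum E_i^2$) and the floor replacement are fine; the paper does the same in a slightly less explicit way, bounding $f(i)\lesssim \log^2 n$ pointwise, while your use of $\sum_i f(i)\lesssim n$ is a touch sharper but equivalent in effect. The one organizational difference is that you truncate the double sum by $\lcm(d_1,d_2)\le n$, whereas the paper truncates by $d_1,d_2\le n$ (which, after the floor replacement, automatically contains the ``middle'' pairs with $d_1,d_2\le n$ but $\lcm>n$ at negligible cost). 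This forces you to control a slightly larger tail $\sum_{\lcm>n}$.

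That tail estimate is where the sketch has a genuine gap. After extracting the factor $1/n$ from $\lcm>n$, a dyadic block $(i,j)$ contributes $\lesssim \tfrac{1}{n}\cdot\tfrac{1}{2^{i+j}}\bigl(\sum_{d_1\sim 2^i}c_{d_1}\bigr)\bigl(\sum_{d_2\sim 2^j}c_{d_2}\bigr)\lesssim 1/n$ with \emph{no} decay in $i+j$, so the ``resulting geometric series in $i+j$'' is not actually there, and since infinitely many blocks satisfy $i+j\gtrsim\log_2 n$, the sum of $O(1/n)$ over them diverges. To close the argument you need to combine the two bounds $\lcm>n$ and $\lcm\ge\max(d_1,d_2)\ge 2^{\max(i,j)}$, giving the block contribution $\lesssim\min\bigl(1/n,\,2^{-\max(i,j)}\bigr)$; summing that over $i+j\gtrsim\log_2 n$ gives $\lesssim (\log^2 n)/n$, which multiplied by $n^3$ stays inside the budget. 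Equivalently, and more simply, split the tail into the ``corner'' $\{d_1>n\text{ or }d_2>n\}$ (bounded $\lesssim (\log n)/n$ exactly as the paper does) and the ``middle'' $\{d_1,d_2\le n,\ \lcm>n\}$, which is $\le \tfrac{1}{n}\sum_{d_1,d_2\le n}\tfrac{c_{d_1}c_{d_2}}{d_1d_2}\lesssim(\log^2 n)/n$. With that repair the rest of your argument goes through and reproduces the paper's bound.
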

\begin{proof}
Lemma \ref{lem:6}  showed
$$  (B_n v_n)_{i} =  n \sum_{d|i} \frac{c_d}{d} + \mathcal{O}_{\varepsilon}(n^{\varepsilon}).$$
Using the crude bound $c_{\ell} \lesssim \log{\ell}$ from above,
$$ \sum_{d|i} \frac{c_d}{d} \lesssim \sum_{d|i} \frac{1+\log{d}}{d} \leq \sum_{d=1}^{i} \frac{1 + \log{d}}{d} \lesssim \log^2{(i)} \leq \log^2{(n)} $$
we deduce
$$ (B_n v_n)_{i}^2 = n^2 \left(\sum_{d|i} \frac{c_d}{d} \right)^2 + \mathcal{O}_{\varepsilon}(n^{1+\varepsilon}) $$
which implies, by summation over $i$,
$$ \| B_n v_n\|^2 = \sum_{i=1}^{n} (B_n v_n)_{i}^2  = n^2 \sum_{i=1}^{n}  \left(\sum_{d|i} \frac{c_d}{d}\right)^2 + \mathcal{O}_{\varepsilon}(n^{2+\varepsilon}).$$
Exchanging the order of summation,
\begin{align*}
     n^2\sum_{i=1}^{n}  \left(\sum_{d|i} \frac{c_d}{d}\right)^2 &=       n^2\sum_{i=1}^{n}  \sum_{d_1|i, d_2|i} \frac{c_{d_1} c_{d_2}}{d_1 d_2} =  n^2\sum_{d_1, d_2=1}^{n} \frac{c_{d_1} c_{d_2}}{d_1 d_2} \left\lfloor \frac{n}{\lcm(d_1, d_2)} \right\rfloor \\
     &=  n^2\sum_{d_1, d_2 = 1}^{n} \frac{c_{d_1} c_{d_2}}{d_1 d_2} \frac{n}{\lcm(d_1, d_2)} + \mathcal{O} \left(  n^2\sum_{d_1, d_2=1}^{n} \frac{c_{d_1} c_{d_2}}{d_1 d_2}\right).
\end{align*}
We start with the main term. Using $\gcd(d_1, d_2) \lcm(d_1, d_2) = d_1 d_2$, it can be rewritten as
\begin{align*}
  n^2\sum_{d_1, d_2=1}^{n}  \frac{c_{d_1} c_{d_2}}{d_1 d_2} \frac{n}{\lcm(d_1, d_2)}  &=   n^2\sum_{d_1, d_2=1}^{n}  \frac{c_{d_1} c_{d_2}}{d_1 d_2} \frac{n \gcd(d_1, d_2)}{d_1 d_2} \\
 &= n^3  \sum_{d_1, d_2=1}^{n} \frac{c_{d_1} c_{d_2} \gcd(d_1, d_2)}{d_1^2 d_2^2}.
\end{align*}
As usual, we would like to argue that we can replace the finite double sum by the infinite double sum. Since the summation indices and the summand are symmetric, we can bound this error from above by
\begin{align*}
    2 \sum_{d_1 = n+1}^{\infty} \sum_{d_2=1}^{\infty}  \frac{c_{d_1} c_{d_2} \gcd(d_1, d_2)}{d_1^2 d_2^2} &=     2 \sum_{d_1 = n+1}^{\infty}  \frac{c_{d_1}}{d_1^2} \sum_{d_2=1}^{\infty}  \frac{  c_{d_2} \gcd(d_1, d_2)}{  d_2^2}.
\end{align*}
Using the trivial bound $\gcd(a,b) \leq \min\left\{a,b\right\}$ and a splitting of the sum into dyadic intervals together with Lemma \ref{lem:dyadic} gives
\begin{align*}
     \sum_{d_2=1}^{\infty}  \frac{  c_{d_2} \gcd(d_1, d_2)}{  d_2^2} &= \sum_{d_2 \leq d_1}^{}  \frac{  c_{d_2} \gcd(d_1, d_2)}{  d_2^2} + \sum_{d_2 = d_1 + 1}^{\infty}  \frac{  c_{d_2} \gcd(d_1, d_2)}{  d_2^2} \\
     &\leq \sum_{d_2 \leq d_1}^{}  \frac{  c_{d_2} }{ d_2} + \sum_{d_2 = d_1 + 1}^{\infty}  \frac{  c_{d_2} d_1}{  d_2^2} \lesssim \log(d_1).
\end{align*}
Plugging this back into the first sum and using
$$ \int_{e}^{\infty} \frac{\log{(x)}}{x^2} dx = \frac{1 + \log{e}}{e},$$
we 
get 
$$  \sum_{d_1 = n+1}^{\infty} \sum_{d_2=1}^{\infty}  \frac{c_{d_1} c_{d_2} \gcd(d_1, d_2)}{d_1^2 d_2^2} \lesssim \frac{\log{n}}{n}.$$
This shows that the error of extending the summation in the main term from $n$ to $\infty$ is $\mathcal{O}(n^2 \log{n})$ It remains to deal with the error term which is easy: dyadic summation and Lemma \ref{lem:dyadic} imply

\begin{align*}
\sum_{d_1, d_2=1}^{n} \frac{c_{d_1} c_{d_2}}{d_1 d_2} = \left(\sum_{d_1=1}^{n}  \frac{c_{d_1} }{d_1}\right)^2 \lesssim \log^2{(n)}.
\end{align*}
This implies
$$ \| B_n v_n\|^2 = n^{3} \sum_{ d_1, d_2=1}^{\infty}\frac{c_{d_1} c_{d_2} \gcd(d_1, d_2)}{d_1^2 d_2^2} + \mathcal{O}_{\varepsilon}(n^{2+\varepsilon}).$$
Finally, using the mean value theorem in the form
$$ \sqrt{c n^3 + y} - \sqrt{c n^3} \sim \frac{y}{2 \sqrt{c n^{3/2}}}$$
for $0 \leq y \ll n^3$ gives the desired result.
\end{proof}

\subsection{Norm of $\left\langle v_n, B_n v_n\right\rangle$}
The purpose of this section is to combine the above arguments to derive an asymptotic expansion for the inner product $\left\langle v_n, B_n v_n\right\rangle$.
\begin{lemma} \label{lem9} We have, for all $\varepsilon > 0$,
$$ \left\langle v_n, B_n v_n\right\rangle = n^2  \sum_{d=1}^{\infty} \frac{c_d^2}{d^2} + \mathcal{O}_{\varepsilon}(n^{1+ \varepsilon}).$$    
\end{lemma}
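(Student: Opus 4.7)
The plan is to start from Lemma \ref{lem:6}, which gives a row-wise asymptotic $(B_n v_n)_i = n \sum_{d|i} c_d/d + \mathcal{O}_\varepsilon(n^\varepsilon)$, and then expand the inner product
$$ \left\langle v_n, B_n v_n \right\rangle = \sum_{i=1}^{n} \frac{\sigma(i)}{i} (B_n v_n)_i.$$
The $\mathcal{O}_\varepsilon(n^\varepsilon)$ entrywise error, summed against $\sigma(i)/i$, contributes at most $n^\varepsilon \sum_{i \leq n} \sigma(i)/i$, which by the $\ell=1$ case of Lemma \ref{lem:5} is $\mathcal{O}_\varepsilon(n^{1+\varepsilon})$, well within the stated error budget.

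Next I would rewrite $\sigma(i)/i = \sum_{e|i} 1/e$ in the main term, which yields
$$ n \sum_{i=1}^{n} \sum_{e | i} \frac{1}{e} \sum_{d|i} \frac{c_d}{d},$$
and then exchange the order of summation by fixing the pair $(d,e)$ and counting $i \leq n$ with $d|i$ and $e|i$, i.e., $\lcm(d,e)\,|\,i$. This produces
$$ n \sum_{d,e=1}^{n} \frac{c_d}{d e} \left\lfloor \frac{n}{\lcm(d,e)} \right\rfloor = n^2 \sum_{d,e=1}^{n} \frac{c_d \gcd(d,e)}{d^2 e^2} + \mathcal{O}\!\left( n \sum_{d,e=1}^{n} \frac{c_d}{de} \right),$$
using $\lcm(d,e)\gcd(d,e) = de$. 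The error term factors as $n \bigl(\sum_{d \leq n} c_d/d\bigr)\bigl(\sum_{e \leq n} 1/e\bigr) \lesssim n \log^2 n$ by the dyadic estimate mentioned after Lemma \ref{lem:dyadic}, which is absorbed into $\mathcal{O}_\varepsilon(n^{1+\varepsilon})$.

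The crucial observation is that the inner sum over $e$, if extended to infinity, is exactly the definition of $c_d$:
$$ \sum_{e=1}^{\infty} \frac{\gcd(d,e)}{e^2} = c_d,$$
so the main term collapses to $n^2 \sum_{d=1}^{\infty} c_d^2/d^2$. I would quantify two truncation errors: (i) replacing $\sum_{e=1}^{n}$ by $\sum_{e=1}^{\infty}$, using the trivial bound $\gcd(d,e) \leq d$ to see that the tail is at most $d/n$, so summing against $n^2 c_d/d^2$ over $d \leq n$ gives at most $n \sum_{d \leq n} c_d/d \lesssim n \log n$; (ii) extending the outer sum in $d$ from $n$ to $\infty$, where the crude $c_d \lesssim \log d$ makes the discarded tail $\mathcal{O}((\log n)^2 / n)$, i.e., $\mathcal{O}(n(\log n)^2)$ after multiplying by $n^2$. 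Both are $\mathcal{O}_\varepsilon(n^{1+\varepsilon})$.

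The main obstacle is bookkeeping the tail errors rather than any genuinely new estimate: one has to confirm that each of the truncations (the floor function, the inner $e$-sum, the outer $d$-sum, and the contribution of the pointwise error from Lemma \ref{lem:6}) lies below $n^{1+\varepsilon}$. Each step relies only on already-proved tools, namely Lemma \ref{lem:5}, Lemma \ref{lem:6}, and the dyadic bounds $\sum_{d \leq X} c_d/d \lesssim \log X$ and $c_d \lesssim \log d$ established after Lemma \ref{lem:dyadic}. Once those bounds are in place, the recognition of the inner $e$-sum as $c_d$ produces the stated closed form automatically.
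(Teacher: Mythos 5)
Your proposal is correct and follows essentially the same route as the paper: start from Lemma \ref{lem:6}, expand the inner product, exchange summation to introduce $\lfloor n/\lcm(d,e)\rfloor$, replace the floor by its leading term, and recognize $\sum_e \gcd(d,e)/e^2 = c_d$. The only cosmetic difference is in how the truncation tails are controlled: you extend the $e$-sum first (using $\gcd(d,e)\leq d$) and then the $d$-sum (using $c_d \lesssim \log d$), whereas the paper bounds the tail in one step by noting the summand is dominated, via $c_{d_2}\geq 1$, by the symmetric one already controlled in the proof of Lemma \ref{lem:8}; both routes give the same $\mathcal{O}_\varepsilon(n^{1+\varepsilon})$.
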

\begin{proof}
Lemma \ref{lem:6} and Lemma \ref{lem:5} with $\ell=1$ for the error term imply
    \begin{align*}
        \left\langle v_n, B_n v_n\right\rangle &= \sum_{i=1}^{n} \left(n \sum_{d|i} \frac{c_d}{d} + \mathcal{O}_{\varepsilon}(n^{\varepsilon}) \right) \frac{\sigma_1(i)}{i} \\
        &=   n\sum_{i=1}^{n} \left(   \sum_{d|i}   \frac{c_d}{d} \right) \left(   \sum_{d|i}   \frac{1}{d} \right) + \mathcal{O}_{\varepsilon}(n^{1+ \varepsilon}) = n\sum_{i=1}^{n} \sum_{d_1 | i \atop d_2 |i } \frac{c_{d_1}}{d_1 d_2} + \mathcal{O}_{\varepsilon}(n^{1+ \varepsilon}).
    \end{align*}
Exchanging the order of summation gives
\begin{align*}
 n\sum_{i=1}^{n} \sum_{d_1 | i \atop d_2 |i } \frac{c_{d_1}}{d_1 d_2}  &= n\sum_{d_1, d_2 =1}^{n} \frac{c_{d_1}}{d_1 d_2} \left\lfloor \frac{n}{\lcm(d_1,d_2)} \right\rfloor \\
 &= n\sum_{d_1, d_2 =1}^{n} \frac{c_{d_1}}{d_1 d_2}   \frac{n}{\lcm(d_1,d_2)}  + \mathcal{O}\left( n\sum_{d_1, d_2 =1}^{n} \frac{c_{d_1}}{d_1 d_2}  \right) \\
 &= n^2  \sum_{d_1, d_2 =1}^{n} \frac{c_{d_1} \gcd(d_1, d_2)}{d_1^2 d_2^2 }    + \mathcal{O}\left(n \sum_{d_1, d_2 =1}^{n} \frac{c_{d_1}}{d_1 d_2}  \right).
\end{align*}
We start by bounding the error term, using $c_{\ell} \geq 1$,
\begin{align*}
   n \sum_{d_1, d_2 =1}^{n} \frac{c_{d_1}}{d_1 d_2} &\leq n \left(\sum_{d_1 = 1}^{n} \frac{c_{d_1}}{d_1}\right)^2 \lesssim n\log^2{(n)}.
\end{align*}
Regarding the main term, as before, we extend the summation in the main term from $n$ to $\infty$. We note that we already bounded the error for a sum with a larger summand (because $c_{d_2} \geq 1$) in the proof of Lemma \ref{lem:8} where we showed that
$$  \sum_{d_1 = n+1}^{\infty} \sum_{d_2=1}^{\infty}  \frac{c_{d_1} c_{d_2} \gcd(d_1, d_2)}{d_1^2 d_2^2} \lesssim \frac{\log{n}}{n}.$$
Therefore
$$ \left\langle v_n, B_n v_n\right\rangle = n  \sum_{d_1, d_2 =1}^{\infty} \frac{c_{d_1} \gcd(d_1, d_2)}{d_1^2 d_2^2 } + \mathcal{O}_{\varepsilon}(n^{1+ \varepsilon}).$$
The main term can be simplified a little bit more
\begin{align*}
     \sum_{d_1= 1}^{\infty} \sum_{d_2 = 1}^{\infty} \frac{c_{d_1} \gcd(d_1, d_2)}{d_1^2 d_2^2} &= \sum_{d_1= 1}^{\infty} \frac{c_{d_1}}{d_1^2} \sum_{d_2 = 1}^{\infty} \frac{ \gcd(d_1, d_2)}{ d_2^2} \\
     &=  \sum_{d_1= 1}^{\infty} \frac{c_{d_1}}{d_1^2}  c_{d_1} = \sum_{d=1}^{\infty} \frac{c_d^2}{d^2}.
\end{align*}
\end{proof}

\subsection{Proof of Theorem}
\begin{proof} We can now collect all the results: we have
$$      \| (A_n^T A_n - B_n)v_n \| \leq C_{\varepsilon} n^{1+\varepsilon}.$$
Therefore, using Cauchy-Schwarz, we have
\begin{align*}
\left| \left\langle v_n, A_n^T A_n  v_n \right\rangle  -  \left\langle v_n, B_n v_n \right\rangle \right| =\left| \left\langle v_n, (A_n^T A_n -B_n) v_n \right\rangle \right| \lesssim \|v_n\| \cdot n^{1+\varepsilon}.
\end{align*}
Therefore
\begin{align*}
     \left\langle \frac{v_n}{\|v_n\|}, \frac{A_n^T A_n v_n}{\| A_n^T A_n v_n\|} \right\rangle &= 
      \frac{\left\langle v_n, A_n^T A_n v_n \right\rangle}{\|v_n\| \cdot \| A_n^T A_n  v_n\|}\\
      &=
     \frac{\left\langle v_n, B_n v_n \right\rangle + \mathcal{O}_{\varepsilon}( \|v_n\| n^{1+\varepsilon})}{\|v_n\| \cdot \left(\|B_n v_n\| +\mathcal{O}_{\varepsilon}( \|v_n\| n^{1+\varepsilon}) \right) }.
\end{align*}
At this point, we start invoking asymptotics. We have
\begin{align*}
    \left\langle v_n, B_n v_n \right\rangle &= n^2  \sum_{d=1}^{\infty} \frac{c_d^2}{d^2} + \mathcal{O}_{\varepsilon}(n^{1+ \varepsilon}) \\
    \|v_n\| &=  \left(\frac{5}{2} \zeta(3) \cdot n\right)^{1/2} + o(\sqrt{n}) \\
    \|B_n v_n\| &= n^{3/2} \left( \sum_{ d_1, d_2=1}^{\infty}\frac{c_{d_1} c_{d_2} \gcd(d_1, d_2)}{d_1^2 d_2^2} \right)^{1/2} + \mathcal{O}_{\varepsilon}\left(n^{1/2 + \varepsilon}\right).
\end{align*}
This shows that
$$   \left\langle \frac{v_n}{\|v_n\|}, \frac{A_n^T A_n v_n}{\| A_n^T A_n v_n\|} \right\rangle  =  \frac{\left\langle v_n, B_n v_n \right\rangle}{\|v_n\| \cdot \|B_n v_n\|} + o(1)$$
and, by computing the constants,
$$ \lim_{n \rightarrow \infty} \frac{\left\langle v_n, B_n v_n \right\rangle}{\|v_n\| \cdot \|B_n v_n\|} =
 \frac{\sqrt{2}}{\sqrt{5} \sqrt{\zeta(3)}} \left( \sum_{d=1}^{\infty} \frac{c_d^2}{d^2} \right)^{}  \left( \sum_{ d_1, d_2=1}^{\infty}\frac{c_{d_1} c_{d_2} \gcd(d_1, d_2)}{d_1^2 d_2^2} \right)^{-1/2}.
$$
\end{proof}

\subsection{Numerics.}
It remains to estimate the infinite sums. Note that we expect
$$ \sum_{d=n+1}^{\infty} \frac{c_d^2}{d^2} \sim \frac{1}{n}$$
which suggests that we should get pretty decent approximations by summing up to a reasonably large $n$.
We have
$ \sum_{d=1}^{1000000}c_d^2/ d^2 = 5.60421.$
Assuming that the error decays like $c/n$, we can use finite sums to estimate $c$ by setting
$$  \sum_{d=1}^{10^5}\frac{c_d^2}{d^2}  + \frac{c}{10^5} = \sum_{d=1}^{10^6}\frac{c_d^2}{d^2}  + \frac{c}{10^6} $$
giving $c = 5.2881$ and suggesting a limit of $5.60422$. For the second sum, we expect the error after summing to $n$ to be, recalling the proof of Lemma \ref{lem:8},
$$  \sum_{d_1 = n+1}^{\infty} \sum_{d_2=1}^{\infty}  \frac{c_{d_1} c_{d_2} \gcd(d_1, d_2)}{d_1^2 d_2^2} \lesssim \frac{\log{n}}{n}.$$
This is somewhat comparable to the first sum. Summing
$$ \sum_{ d_1, d_2=1}^{10000}\frac{c_{d_1} c_{d_2} \gcd(d_1, d_2)}{d_1^2 d_2^2} = 10.4912 $$
and using the same type of extrapolation trick, suggesting that the error is $\sim 20.53/N$, suggests the limit to be roughly $10.4933$. This then gives
$$  \lim_{n \rightarrow \infty}  \frac{\left\langle v_n, B_n v_n \right\rangle}{\|v_n\| \cdot \|B_n v_n\|}  \sim 0.997992.$$
We also note that since $A_n$ and $A_n^T$ are sparse and $v_n$ is completely explicit, one can actually test the Theorem for fairly large values of $n$. We were able to run it for $n = 50.000$ and got
$$  \left\langle \frac{v_n}{\|v_n\|}, \frac{A_n^T A_n v_n}{\| A_n^T A_n v_n\|} \right\rangle \sim 0.99754$$
for which the first three digits match.

\end{document}